\subjclass[2010]{32A08}
\keywords{ LINEAR FRACTIONAL MAPS, CROSS RATIO, TRANSITIVITY}
\theoremstyle{theorem}
\newtheorem{theorem}{Theorem}
\theoremstyle{lemma}
\newtheorem{lemma}{Lemma}
\theoremstyle{definition}
\newtheorem*{definition}{Definition}
\newtheorem*{example}{Example}
\newtheorem*{question}{Question}
\begin{document}

\title{A Generalized Cross Ratio}
\markright{A Generalized Cross Ratio}
\author{Michael R. Pilla \\ \href{mailto:mpilla@iu.edu}{mpilla@iu.edu} \\ Swain Hall East 205 \\ 729 East 3rd St. \\ Bloomington, IN 47405}

\maketitle

\begin{abstract}
In one complex variable, the cross ratio is a well-known quantity associated with four given points in the complex plane that remains invariant under linear fractional maps. In particular, if one knows where three points in the complex plane are mapped under a linear fractional map, one can use this invariance to explicitly determine the map and to show that linear fractional maps are $3$-transitive. In this paper, we define a generalized cross ratio and determine some of its basic properties. In particular, we determine which hypotheses must be made to guarantee that our generalized cross ratio is well defined. We thus obtain a class of maps that obey similar transitivity properties as in one complex dimension, under some more restrictive conditions.
\end{abstract}

\section{Background.}

In 1872, Felix Klein famously introduced a point of view regarding what geometry should be about \cite{Klein}. Known as the Erlangen program, Klein viewed geometry as a study of invariants under group transformations. An important and robust example of such an invariant quantity is given by the cross ratio.

Given four finite distinct points $z_1$, $z_2$, $z_3$, and $z_4$ in the complex plane, the cross ratio is defined as 

\begin{equation}\label{lfm1}
(z_1, z_2 , z_3, z_4)=\frac{(z_1-z_3)(z_2-z_4)}{(z_1-z_4)(z_2-z_3)}.
\end{equation}

If $z_i=\infty$ for some $i=1,2,3,4$ then we cancel the terms with $z_i$. For example, if $z_1=\infty$, then 

$$(z_1, z_2, z_3, z_4)=\frac{(z_2-z_4)}{(z_2-z_3)}.$$

With this definition, we may view the cross ratio as a function of $z$ given by $(z, z_1 , z_2, z_3)$. Recall that a linear fractional map, also known as a Mobius transformation, is defined as 

$$f(z)=\frac{az+b}{cz+d}$$

\noindent where the coefficients $a$, $b$, $c$, and $d$ are complex numbers such that $ad-bc \neq 0$ (otherwise $f(z)$ is a constant function).

It is well known that the cross ratio is preserved under linear fractional maps. Thus if $\phi$ is a linear fractional map in one complex variable and $\phi(z_i)=w_i$ for $i=1,2,3$ then 

$$\frac{(w-w_2)(w_1-w_3)}{(w-w_3)(w_1-w_2)}=\frac{(z-z_2)(z_1-z_3)}{(z-z_3)(z_1-z_2)}$$

\noindent where $w=\phi(z)$. In such a case one may solve for $w$ to determine the map explicitly.

\section{Homogeneous coordinates.}

Given a linear fractional map

$$\phi(z)=\frac{az+b}{cz+d}$$

\noindent we may define the associated matrix of $\phi$ as 

\begin{equation*}
    m_{\phi} = \begin{pmatrix}
        a & b \\
        c & d
     \end{pmatrix}.
\end{equation*}

Although linear fractional maps can be associated with matrices, they are clearly not linear transformations of $\mathbb{C}$ (take $f(z)=\frac{1}{z}$, for example). Instead, the associated matrix acts as a linear transformation on what are known as homogeneous coordinates in $\mathbb{C}^2$. Homogeneous coordinates are defined as follows. We associate the point $u=(u_1, u_2) \in \mathbb{C}^2$ \textbackslash $\{0\}$ with the point $z=\frac{u_1}{u_2} \in \mathbb{C}$. We will write $z \sim u$ if $z \in \mathbb{C}$ is the point associated with $u \in \mathbb{C}^2$. This space is equivalent to the Riemann sphere and is known as the complex projective line  ${\bf CP}^1$. The point $(1,0)$ is associated with the point at infinity on the Riemann sphere.  Notice if $\phi(z)=w$ and the point $u \in {\bf CP}^1$ is associated with $z$, then $v=m_{\phi}u$ is associated with the point $w$ and vice versa. That is, if $z \sim u$, then $\phi(z) \sim m_{\phi}u$.

The reason we are introducing these homogeneous coordinates is made evident by the following discussion. A linear transformation in $\mathbb{C}^2$ can be represented by a complex matrix as

$$\begin{pmatrix}v_1  \\  v_2\end{pmatrix}=\begin{pmatrix}a & b  \\  c & d \end{pmatrix}\begin{pmatrix}u_1  \\  u_2\end{pmatrix}=\begin{pmatrix}au_1+bu_2  \\  cu_1 +du_2 \end{pmatrix}.$$

Let $z \sim \begin{pmatrix}u_1  \\  u_2\end{pmatrix}$ and $w \sim \begin{pmatrix}v_1  \\  v_2\end{pmatrix}$. Then we can associate the above linear transformation with the linear fractional transformation 

$$w=\frac{v_1}{v_2}=\frac{au_1+bu_2}{cu_1+du_2}=\frac{a\left(\frac{u_1}{u_2}\right)+b}{c\left(\frac{u_1}{u_2}\right)+d}=\frac{az+b}{cz+d}.$$

Thus, the associated matrices are linear transformations acting on ${\bf CP}^1$! (see [$4$] for more details) A routine calculation shows that $m_{\phi_1 \circ \phi_2}=m_{\phi_1}m_{\phi_2}$ and $m_{\phi^{-1}}=(m_{\phi})^{-1}$ as well.

We next define the cross ratio in homogeneous coordinates. As we will see, this is the key to generalized the definition to higher dimensions. First we define the following quantity. If $u=(u_1, u_2)$ and $v=(v_1, v_2)$ are points in ${\bf CP}^1$, then we define

\begin{equation}
\label{eqn:def}
[u,v]=\det \begin{pmatrix}
u_1 & v_1\\ 
u_2 & v_2 
\end{pmatrix}.
\end{equation}

Since we have reserved subscripts for the components of a point in several variables, for the remainder of our discussion we choose to designate {\it superscripts} for distinct points. As we will only consider linear fractional maps, there should be no confusion about notation. Just remember, superscripts will not designate exponents but rather distinct points. We may then define the cross ratio as follows.

\begin{definition}
Let $u^1$, $u^2$, $u^3$, and $u^4$ be four distinct points in  ${\bf CP}^1$, then we define the cross ratio to be

$$(u^1, u^2, u^3, u^4)=\frac{[u^1,u^3][u^2,u^4]}{[u^1,u^4][u^2,u^3]}.$$

\end{definition}

Amazingly, this definition agrees with the traditional definition of the cross ratio. For finite points we can see this as follows. Given four finite distinct points $z^1$, $z^2$, $z^3$, and $z^4$ in $\mathbb{C}$, take a moment to convince yourself that we may write the associated points as $u^i=(z^i, 1)$ for $i=1,2,3,4$ and find 

\begin{align*}(z^1, z^2, z^3, z^4)&=\frac{(z^1-z^3)(z^2-z^4)}{(z^1-z^4)(z^2-z^3)}=\frac{\det \begin{pmatrix}
z^1 & z^3\\ 
1 & 1 
\end{pmatrix}\det \begin{pmatrix}
z^2 & z^4\\ 
1 & 1 
\end{pmatrix}}{\det \begin{pmatrix}
z^1 & z^4\\ 
1 & 1 
\end{pmatrix}\det \begin{pmatrix}
z^2 & z^3 \\ 
1 & 1 
\end{pmatrix}}\\
&=\frac{[u^1,u^3][u^2,u^4]}{[u^1,u^4][u^2,u^3]}=(u^1,u^2,u^3,u^4).
\end{align*}

If one of our points is associated with the point at infinity, then without loss of generality, we let $z^1=\infty$ and the point associated with $z^1$ may, without loss of generality, be written as $u^1=(1,0)$ which gives us

\begin{align*}(z^1, z^2, z^3, z^4)&=\frac{(z^2-z^4)}{(z^2-z^3)}=\frac{\det \begin{pmatrix}
z^2 & z^4\\ 
1 & 1 
\end{pmatrix}}{\det \begin{pmatrix}
z^2 & z^3 \\ 
1 & 1 
\end{pmatrix}}\\
&=\frac{\det \begin{pmatrix}
1 & z^3\\ 
0 & 1 
\end{pmatrix}\det \begin{pmatrix}
z^2 & z^4\\ 
1 & 1 
\end{pmatrix}}{\det \begin{pmatrix}
1 & z^4\\ 
0 & 1 
\end{pmatrix}\det \begin{pmatrix}
z^2 & z^3 \\ 
1 & 1 
\end{pmatrix}}=\frac{[u^1,u^3][u^2,u^4]}{[u^1,u^4][u^2,u^3]}=(u^1, u^2,u^3,u^4).
\end{align*}

Thus, the cross ratio is not only invariant under $\phi$  but also $m_{\phi}$. In addition to this, the definition of the cross ratio in terms of homogeneous coordinates is more unifying in the sense that the case of ``infinity" is included in the definition. This resonates with Klein's Erlangen program and its focus on projective geometry (geometry done on projective spaces) as a more unifying geometry. In particular, we utilize this point of view to define a cross ratio in several complex variables.

\section{Linear Fractional Maps in Several Complex Variables.}

To define the cross ratio in several complex variables, we must first say what it means to be a linear fractional map in $\mathbb{C}^N$ for $N>1$. We saw that in one variable the linear fractional maps were not linear transformations on $\mathbb{C}$ but rather on the complex projective line ${\bf CP}^1$. In this paper, we take the perspective that we would like our linear fractional maps in $\mathbb{C}^N$ to be linear transformations on the complex projective space ${\bf CP}^N$.

Thus we associate the point $z=(z_1', z_2)$  where $z_1' \in \mathbb{C}^N$ and $z_2 \in \mathbb{C}$, $z \neq 0$, with the point $\frac{z_1'}{z_2} \in \mathbb{C}^N$. This associated space is known as the complex projective space ${\bf CP}^N$. We now consider a linear transformation in ${\bf CP}^N$ which can be represented by a complex matrix as

\begin{equation*}
  \begin{pmatrix}
        A & B \\
        C^* & D
     \end{pmatrix}
\end{equation*}

\noindent where $A$ is an $N \times N$ matrix, $B$ and $C$ are column vectors in $\mathbb{C}^N$, $D \in \mathbb{C}$, and $C^*$ represents the conjugate transpose of $C$. Denote the rows of $A$ by $a_i$ for $i=1,...,N$ and  $B=\begin{pmatrix}b_1 & \cdots & b_N \end{pmatrix}^T$ and let $\langle \cdot, \cdot \rangle$ be the standard inner product. For a point $\begin{pmatrix}z_1'  \\  z_2\end{pmatrix}$ in ${\bf CP}^N$, we have

$$\begin{pmatrix}w_1'  \\  w_2\end{pmatrix}=\begin{pmatrix}A & B  \\  C^* & D \end{pmatrix}\begin{pmatrix}z_1'  \\  z_2\end{pmatrix}=\begin{pmatrix}\langle a_1, \overline{z_1'}\rangle+b_1z_2  \\ \vdots \\ \langle a_N, \overline{z_1'}\rangle+b_Nz_2 \\ \langle z_1',C \rangle+Dz_2 \end{pmatrix}.$$

Let $z \sim \begin{pmatrix}z_1'  \\  z_2\end{pmatrix}$ and $w \sim \begin{pmatrix}w_1'  \\  w_2\end{pmatrix}$. Then we can associate the above linear transformation in ${\bf CP}^N$ with the not necessarily linear transformation in $\mathbb{C}^N$ given by

\begin{align*}w=\frac{w_1'}{w_2}&=\left(\frac{\langle a_1, \overline{z_1'}\rangle+b_1z_2 }{\langle z_1',C \rangle+Dz_2},...,\frac{\langle a_N, \overline{z_1'}\rangle+b_Nz_2 }{\langle z_1',C\rangle+Dz_2}\right)\\
&=\left(\frac{\langle a_1, \overline{\frac{z_1'}{z_2}}\rangle+b_1 }{\langle\frac{z_1'}{z_2},C\rangle+D},...,\frac{\langle a_N, \overline{\frac{z_1'}{z_2}}\rangle+b_N }{\langle \frac{z_1'}{z_2},C\rangle+D}\right)\\
&=\left(\frac{\langle a_1, \overline{z}\rangle+b_1 }{\langle z,C\rangle+D},...,\frac{\langle a_N, \overline{z}\rangle+b_N }{\langle z,C\rangle+D}\right)\\
&=\frac{Az+B}{\langle z,C\rangle+D}.
\end{align*}

This is precisely the definition given by Cowen and MacCluer \cite{Cowen} and is the one we will adopt.

\begin{definition}
We say $\phi$ is a linear fractional map in $\mathbb{C}^N$ if 

\begin{equation}
\label{LFM}
\phi(z)=\frac{Az+B}{\langle z,C\rangle+D}.
\end{equation}

\noindent where $A$ is an $N \times N$ matrix, $B$ and $C$ are column vectors in $\mathbb{C}^N$, $D \in \mathbb{C}$, and $\langle \cdot, \cdot \rangle$ is the standard inner product.
\end{definition}

This class of maps has been studied in more generality by others \cite{H}, \cite{P}, \cite{S}, \cite{Sm}.

We define the associated matrix $m_{\phi}$ of the linear fractional map $\phi(z)=\frac{Az+B}{\langle z,C\rangle+D}$ to be given by

\begin{equation*}
    m_{\phi} = \begin{pmatrix}
        A & B \\
        C^* & D
     \end{pmatrix}
\end{equation*}

\noindent which, as we saw, is a linear transformation on ${\bf CP}^N$. If $\phi(z)=w$ and the point $u \in \mathbb{C}^{N+1}$ is associated with $z$, then $v=m_{\phi}u$ is associated with the point $w$ and vice versa. A routine calculation also shows that $m_{\phi_1 \circ \phi_2}=m_{\phi_1}m_{\phi_2}$ and $m_{\phi^{-1}}=(m_{\phi})^{-1}$. Thus, as we did in one variable, we can toggle back and forth between the complex space $\mathbb{C}^N$ and ${\bf CP}^N$ at our convenience.

\begin{example}
Let $\phi$ be the linear fractional map in two complex variables given by

$$\phi(z)=\phi(z_1,z_2)=\left( \frac{z_1+1}{-z_1+3}, \frac{2z_2}{-z_1+3} \right).$$

Identifying $\langle z, C \rangle$ with $C^*z$, we can write this as 

$$\left( \frac{z_1+1}{-z_1+3}, \frac{2z_2}{-z_1+3} \right)=\frac{\begin{pmatrix}
       1 & 0  \\
       0 & 2 
     \end{pmatrix}\begin{pmatrix}
       z_1 \\
       z_2 
     \end{pmatrix}
+\begin{pmatrix}
       1 \\
       0 
     \end{pmatrix}}{(-1, 0)^T(z_1, z_2)+3}.$$

Thus letting $A=\begin{pmatrix}
       1 & 0  \\
       0 & 2 
     \end{pmatrix}$, $B=(1, 0)^T$, $C=(-1, 0)^T$, and $D=3$, we see that the associated matrix of $\phi$ is then given by
\begin{equation*}
    m_{\phi} = \begin{pmatrix}
        A & B \\
        C^* & D
     \end{pmatrix}=
 \begin{pmatrix}
        \phantom{-}1 & 0 & 1 \\
       \phantom{-} 0 & 2 & 0 \\
       -1 & 0 & 3
     \end{pmatrix}.
\end{equation*}

\end{example}

\section{The Cross Ratio in Several Complex Variables.}

While generalized cross ratios have been defined previously in terms of the Schwarzian derivative \cite{Gong}, this paper takes a different perspective that is more tractable and attempts to be in the elementary spirit of the traditional cross ratio. For one complex variable, we saw that the cross ratio is invariant under the maps $\phi$ and $m_{\phi}$. We would like the same to be true for the cross ratio in $\mathbb{C}^N$. The cross ratio can also be utilized to show that any three pairwise distinct points in $\mathbb{C}$ that are sent to another set of three pairwise distinct points uniquely determine a linear fractional map in $\mathbb{C}$. The proof of this uses the fact that one can utilize cross ratios to define a unique map that sends the pairwise distinct triple $(z^1,z^2,z^3)$  to the standardized points $(1,0, \infty)$. Generalizing to higher dimensions, however, does not come without cost. Even for $N=2$, one may recall that projective transformations map lines to lines.  Without further hypotheses to avoid problems of collinearity, the task of mapping a pairwise distinct quadruple $(z^1,z^2,z^3, z^4)$ to four standardized points is hopeless. Treading carefully, we use these facts to motivate our definition of a cross ratio in $\mathbb{C}^N$ for $N>1$. In particular we seek a minimal definition that will preserve the properties of the cross ratio just described.

First, recalling that subscripts represent coordinates (components of a vector) and superscripts distinguish distinct points in higher dimensions, we define the following quantity. If we let $u^i=(u^i_1, u^i_2,...,u^i_{N+1})$ for $i=1,...,N+1$ be elements of ${\bf CP}^N$, then we define

$$[u^1, u^2,...,u^{N+1}]=\det \begin{pmatrix}
    u^1_1 & u_1^2 & u_1^3 & \dots  & u_1^{N+1} \\
    u_2^1 & u_2^2 & u_2^3 & \dots  & u_2^{N+1} \\
    \vdots & \vdots & \vdots & \ddots & \vdots \\
    u_{N+1}^1 & u_{N+1}^2 & u_{N+1}^3 & \dots  & u_{N+1}^{N+1}
\end{pmatrix}.$$

This is a natural generalization of the definition given by Equation \ref{eqn:def}.

In order to motivate our general definition, we began by defining a cross ratio in $\mathbb{C}^2$ and proceed to generalize to $\mathbb{C}^N$.

\begin{definition}
Given five distinct points $u^i$ for $i=1,...,5$ in ${\bf CP}^2$, we define the cross ratio as 

$$(u^1, u^2, u^3, u^4, u^5)=\frac{[u^1, u^3, u^5][u^2, u^4, u^5]}{[u^1, u^4, u^5][u^2, u^3, u^5]}.$$

\end{definition}

One motivation for choosing this particular cross ratio is because it ``reduces" to our usual cross ratio using the appropriate coordinates. We will also find it useful when addressing questions of transitivity. Let's look at an example of how, under the right conditions, this definition ``reduces" to the standard definition of the cross ratio. For the vectors $u^i=(u^i_1, u^i_2, u^i_3)$ with $i=1,..,4$, if we standardize coordinates so that $u^i_3=1$, $z^i_1=u^i_1/u^i_3$, and $z^i_2= u^i_2/u^i_3$, we get vectors in $\mathbb{C}^2/\{0\}$ which we can interpret as ${\bf CP}^1$. We still have one extra vector $u^5$ though. However, since we let $u^i_3=1$ for $i=1,...,4$, why not choose $u^5$ to be a ``point at infinity". In our context, this means choosing the final coordinate to equal $0$. If we choose $u^5=(0,1,0)$, which is {\it a} point on the line at infinity, we have

\begin{align*}
&(u^1, u^2, u^3, u^4, u^5)=\frac{[u^1, u^3, u^5][u^2, u^4, u^5]}{[u^1, u^4, u^5][u^2, u^3, u^5]}\\
&=\frac{ \det\begin{pmatrix}
    u^1_1 & u^3_1 & 0 \\
    u^1_2 & u^3_2 & 1 \\
    u^1_3 & u^3_3 & 0
\end{pmatrix}\det\begin{pmatrix}
    u^2_1 & u^4_1 & 0 \\
    u^2_2 & u^4_2 & 1 \\
    u^2_3 & u^4_3 & 0
\end{pmatrix}}
{\det\begin{pmatrix}
    u^1_1 & u^4_1 & 0 \\
    u^1_2 & u^4_2 & 1 \\
    u^1_3 & u^4_3 & 0
\end{pmatrix}\det\begin{pmatrix}
    u^2_1 & u^3_1 & 0 \\
    u^2_2 & u^3_2 & 1 \\
    u^2_3 & u^3_3 & 0
\end{pmatrix}}
=\frac{ \det\begin{pmatrix}
    z^1_1 & z^3_1 & 0 \\
    z^1_2 & z^3_2 & 1 \\
    1 & 1 & 0
\end{pmatrix}\det\begin{pmatrix}
   z^2_1 & z^4_1 & 0 \\
    z^2_2 & z^4_2 & 1 \\
    1 & 1 & 0
\end{pmatrix}}
{\det\begin{pmatrix}
   z^1_1 & z^4_1 & 0 \\
    z^1_2 & z^4_2 & 1 \\
    1 & 1 & 0
\end{pmatrix}\det\begin{pmatrix}
    z^2_1 & z^3_1 & 0 \\
    z^2_2 & z^3_2 & 1 \\
    1 & 1 & 0
\end{pmatrix}}\\
&=\frac{(z^1-z^3)(z^2-z^4)}{(z^1-z^4)(z^2-z^3)}=(z^1, z^2 , z^3, z^4).
\end{align*}

In $\mathbb{C}$, there are $4!=24$ permutations of $(z^1, z^2, z^3, z^4)$ of which $6$ are distinct. In $\mathbb{C}^2$, we recognize that 

$$(z^i, z^j, z^k , z^l, z^m )=(z^k, z^l, z^i , z^j, z^m )=(z^j, z^i, z^l , z^k, z^m)=(z^l, z^k, z^j , z^i, z^m)$$

\noindent and that no other permutation is equal to $(z^i, z^j, z^k , z^l, z^m )$. Thus, we have five ways to choose $m$ and once $m$ is chosen, we start with $i$ (by the above equation we may, equivalently, start with any of $i,j,k,l$ and have $3!=6$ ways to choose the remaining three values which gives us $5 \times 6=30$ distinct permutations. 

Let's see, explicitly, what this definition looks in like our usual complex variables when the $z^i$'s for $i=1,...,5$ are finite complex numbers where we are once again putting the notation for the distinct points in the superscript, leaving the subscripts for vector components. This means we set the third coordinate to $1$ in each case. Thus we have

\begin{align*}
&(u^1, u^2, u^3, u^4, u^5)=\frac{[u^1, u^3, u^5][u^2, u^4, u^5]}{[u^1, u^4, u^5][u^2, u^3, u^5]}\\
&=\frac{ \det\begin{pmatrix}
    u^1_1 & u^3_1 & u^5_1 \\
    u^1_2 & u^3_2 & u^5_2 \\
    u^1_3 & u^3_3 & u^5_3
\end{pmatrix}\det\begin{pmatrix}
    u^2_1 & u^4_1 & u^5_1 \\
    u^2_2 & u^4_2 & u^5_2 \\
    u^2_3 & u^4_3 & u^5_3
\end{pmatrix}}
{\det\begin{pmatrix}
    u^1_1 & u^4_1 & u^5_1 \\
    u^1_2 & u^4_2 & u^5_2 \\
    u^1_3 & u^4_3 & u^5_3
\end{pmatrix}\det\begin{pmatrix}
    u^2_1 & u^3_1 & u^5_1 \\
    u^2_2 & u^3_2 & u^5_2 \\
    u^2_3 & u^3_3 & u^5_3
\end{pmatrix}}
=\frac{ \det\begin{pmatrix}
    z^1_1 & z^3_1 & z^5_1 \\
    z^1_2 & z^3_2 & z^5_2 \\
    1 & 1 & 1
\end{pmatrix}\det\begin{pmatrix}
    z^2_1 & z^4_1 & z^5_1 \\
    z^2_2 & z^4_2 & z^5_2 \\
    1 & 1 & 1
\end{pmatrix}}
{\det\begin{pmatrix}
    z^1_1 & z^4_1 & z^5_1 \\
    z^1_2 & z^4_2 & z^5_2 \\
    1 & 1 & 1
\end{pmatrix}\det\begin{pmatrix}
    z^2_1 & z^3_1 & z^5_1 \\
    z^2_2 & z^3_2 & z^5_2 \\
    1 & 1 & 1
\end{pmatrix}}\\
&=\frac{\left(z^1_1(z^3_2-z^5_2)-z^3_1(z^1_2-z^5_2)+z^5_1(z^1_2-z^3_2)\right)\left(z^2_1(z^4_2-z^5_2)-z^4_1(z^2_2-z^5_2)+z^5_1(z^2_2-z^4_2)\right)}{\left(z^1_1(z^4_2-z^5_2)-z^4_1(z^1_2-z^5_2)+z^5_1(z^1_2-z^4_2)\right)\left(z^2_1(z^3_2-z^5_2)-z^3_1(z^2_2-z^5_2)+z^5_1(z^2_2-z^3_2)\right)}\\&=(z^1, z^2 , z^3, z^4,z^5).
\end{align*}

This expression is definitely cleaner in homogeneous coordinates! As an exercsie for the reader, try writing down the expression when one of the $z^i$'s is a point on the line at infinity.

Next we recall that by letting $z=z^1$ be a complex variable, the cross ratio in one complex variable becomes the linear fractional map given by

$$(z, z^2, z^3, z^4)=\frac{(z-z^3)(z^2-z^4)}{(z-z^4)(z^2-z^3)}.\footnote{This is just Equation \ref{lfm1} with superscript notation instead.}$$

As we saw earlier, this definition coincides with the definition given in terms of homogeneous coordinates. Notice this cross ratio sends the points $z^2$, $z^3$, and $z^4$ to $1$, $0$, and $\infty$, respectively. We would like our generalized cross ratio to be able to recover this highlight in some way. In order to achieve this, we first have to extend our definition. After all, if we let $z=z^1=(z_1, z_2)$, then the above expression for $(z^1, z^2 , z^3, z^4,z^5)$ does not give us a linear fractional map as given in Definition \ref{LFM}. Such a map should map $\mathbb{C}^2$ to $\mathbb{C}^2$, not $\mathbb{C}^2$ to $\mathbb{C}$! Thus we define the cross ratio pair.
\begin{definition}

We define the cross ratio pair in ${\bf CP}^2$ as

$$(u^1, u^2, u^3, u^4, u^5)_2=\left( \frac{[u^1, u^3, u^5][u^2, u^4, u^5]}{[u^1, u^4, u^5][u^2, u^3, u^5]}, \frac{[u^1, u^3, u^4][u^2, u^4, u^5]}{[u^1, u^4, u^5][u^2, u^3, u^4]} \right)$$

\noindent where we see that this defines a linear fractional map when the point associated with $u^1$ is a variable in $\mathbb{C}^2$. 

\end{definition}

Motivated by the above results, we define a generalized cross ratio. Although the notation is a bit more intimidating, this definition is nothing more than the natural extension of the two variable case.

\begin{definition}
Given $N+3$ distinct points $u^i$ for $i=1,...,N+3$ in ${\bf CP}^N$, we define the cross ratio as 

$$(u^1, u^2, ...,u^{N+2}, u^{N+3})=\frac{[u^1, u^3, u^5,..., u^{N+3}][u^2, u^4, u^5,..., u^{N+3}]}{[u^1, u^4, u^5,..., u^{N+3}][u^2, u^3, u^5,..., u^{N+3}]}$$

\noindent where each ellipsis represents the ordered sequence of omitted digits. 
\end{definition}

To simplify the definition of the cross ratio $N$-tuple, we introduce new notation. We define $[u^i, u^j]_N^c$ by

\begin{equation*}[u^i, u^j]_N^c=[u^1,...,u^{i-1},u^{i+1},..., u^{j-1}, u^{j+1},..., u^{N+3}].
\end{equation*}

This sets the stage for our general definition. If the notation appears intimidating, just reminder yourself that it is the natural generalization of the definition of the cross ratio pair in ${\bf CP}^2$.

\begin{definition} 

We define the cross ratio $N$-tuple to be given by

$$(u^1, u^2,..., u^{N+2}, u^{N+3})_N=\left( \bigg\{\frac{[u^2, u^i]_N^c[u^1, u^3]_N^c}{[u^1, u^i]_N^c[u^2, u^3]_N^c}\bigg\}_{i=4,...,N+3}\right)$$

\noindent where the curly brackets denote a sequence over $i=4,...,N+3$.

\end{definition}

As in the two variable case, this defines a linear fractional map when the point associated with $u^1$ is a variable in $\mathbb{C}^N$.

We saw that we could ``reduce" our cross ratio in two variables to the one variable definition. The next theorem tells us that this works in any dimension. The reasoning, again, is a natural extension of what was demonstrated in two variables.

\begin{theorem}
Identifying coordinates $z^i_j=\frac{u^i_j}{u^{N+1}_j}$ for $j=1,...,N$ and $u^{N+3}=(0,0,...,0,1,0)^T$, the cross ratio in ${\bf CP}^N$ reduces to the cross ratio in ${\bf CP}^{N-1}$.
\end{theorem}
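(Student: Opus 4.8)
The plan is to carry out a single cofactor expansion and let the algebra finish the job. I would begin by writing the cross ratio in ${\bf CP}^N$ as the quotient of its four $(N+1)\times(N+1)$ brackets $[u^1,u^3,u^5,\dots,u^{N+3}]$, $[u^2,u^4,u^5,\dots,u^{N+3}]$, $[u^1,u^4,u^5,\dots,u^{N+3}]$ and $[u^2,u^3,u^5,\dots,u^{N+3}]$. In each of these $u^{N+3}$ is the last column, and since $u^{N+3}=(0,\dots,0,1,0)^{T}$ its only nonzero entry is a $1$ in row $N$. Expanding each determinant along that last column collapses it to $(-1)^{N+(N+1)}=-1$ times the minor obtained by deleting row $N$ and the last column, and the first step is just to record this identity for one bracket.

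The second step is to identify that minor. Deleting the last column removes $u^{N+3}$, and deleting row $N$ removes the $N$-th coordinate from every remaining point, so the minor is the $N\times N$ determinant of those points written in the coordinates $(1,\dots,N-1,N+1)$. Under the normalization in the statement these are exactly the vectors $\tilde u^i=(z^i_1,\dots,z^i_{N-1},1)\in{\bf CP}^{N-1}$, the homogeneous coordinates of the affine point $(z^i_1,\dots,z^i_{N-1})\in\mathbb{C}^{N-1}$; the coordinate $z^i_N$ drops out, which is the price of the reduction. Hence $[u^1,u^3,u^5,\dots,u^{N+3}]=-[\tilde u^1,\tilde u^3,\tilde u^5,\dots,\tilde u^{N+2}]$, where the right-hand side is now a ${\bf CP}^{N-1}$ bracket of $N$ points, and likewise for the other three. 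The normalization is in any case immaterial to the value of the cross ratio, since every point occurs equally often in the numerator and the denominator, so rescaling a single point changes nothing.

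Substituting these four identities into the defining quotient, the four factors of $-1$ cancel in pairs and one is left with
$$\frac{[\tilde u^1,\tilde u^3,\tilde u^5,\dots,\tilde u^{N+2}][\tilde u^2,\tilde u^4,\tilde u^5,\dots,\tilde u^{N+2}]}{[\tilde u^1,\tilde u^4,\tilde u^5,\dots,\tilde u^{N+2}][\tilde u^2,\tilde u^3,\tilde u^5,\dots,\tilde u^{N+2}]},$$
which is precisely the cross ratio $(\tilde u^1,\tilde u^2,\dots,\tilde u^{N+2})$ in ${\bf CP}^{N-1}$ on the $N+2$ points $\tilde u^i$; one checks the ellipsis patterns agree, since the four ${\bf CP}^N$ brackets omit from $\{1,\dots,N+3\}$ exactly the pairs $\{2,4\}$, $\{1,3\}$, $\{2,3\}$, $\{1,4\}$, and after deleting $u^{N+3}$ these become the four bracket patterns of the ${\bf CP}^{N-1}$ cross ratio. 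Taking $N=2$ with $u^5=(0,1,0)$ reproduces the explicit two-variable reduction computed earlier. This is bookkeeping rather than anything deep, so I do not expect a genuine obstacle; the points needing care are the sign and index accounting --- that the cofactor sign $(-1)^{N+(N+1)}$ is the same in all four brackets so that it cancels, that striking row $N$ leaves the coordinate order $(1,\dots,N-1,N+1)$ consistent across brackets, and that the surviving column lists match term by term those in the ${\bf CP}^{N-1}$ cross ratio. I would also note that the statement concerns the single-valued cross ratio, as in the worked example; the analogous reduction for the cross ratio $N$-tuple follows by applying the same expansion componentwise.
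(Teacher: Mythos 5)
Your proof is correct and follows essentially the same route as the paper's: normalize the last homogeneous coordinate to $1$, expand each bracket along the column $u^{N+3}=(0,\dots,0,1,0)^{T}$, and observe that the common cofactor sign cancels in the quotient, leaving the lower-dimensional brackets. The paper dresses this up as an induction, but its inductive step is exactly your direct computation, so there is no substantive difference.
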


\begin{proof}
We use proof by induction. We have seen that our result is true for $N=2$. Now suppose $N=k+1$ where $k>1$, making the appropriate identifications, we have
\begin{align*}
&(u^1, u^2, ..., u^{(k+1)+3})=\frac{[u^1, u^3, u^5,..., u^{k+4}][u^2, u^4, u^5,..., u^{k+4}]}{[u^1, u^4, u^5,..., u^{k+4}][u^2, u^3, u^5,..., u^{k+4}]}\\
&=\frac{ \det \begin{pmatrix}
    u^1_1 & u^3_1 & u_1^5 & \dots  & u^{k+4}_1 \\
   u^1_2 & u^3_2 & u_2^5 & \dots  & u^{k+4}_2 \\
    \vdots & \vdots & \vdots & \ddots & \vdots \\
    u^1_{k+2} & u^3_{k+2} & u_{k+2}^5 & \dots  & u^{k+4}_{k+2}
\end{pmatrix}\det \begin{pmatrix}
    u_1^2 & u_1^4 & u_1^5 & \dots  & u_1^{k+4} \\
    u_2^2 & u_2^4 & u_2^5 & \dots  & u_2^{k+4} \\
    \vdots & \vdots & \vdots & \ddots & \vdots \\
    u_{k+2}^2 & u_{k+2}^4 & u_{k+2}^5 & \dots  & u_{k+2}^{k+4}
\end{pmatrix}}
{\det \begin{pmatrix}
    u_1^1 & u_1^4 & u_1^5 & \dots  & u_1^{k+4} \\
    u_2^1 & u_2^4 & u_2^5 & \dots  & u_2^{k+4} \\
    \vdots & \vdots & \vdots & \ddots & \vdots \\
    u_{k+2}^1 & u_{k+2}^4 & u_{k+2}^5 & \dots  & u_{k+2}^{k+4}
\end{pmatrix}\det \begin{pmatrix}
    u_1^2 & u_1^3 & u_1^5 & \dots  & u_1^{k+4} \\
    u_2^2 & u_2^3 & u_2^5 & \dots  & u_2^{k+4} \\
    \vdots & \vdots & \vdots & \ddots & \vdots \\
    u_{k+2}^2 & u_{k+2}^3 & u_{k+2}^5 & \dots  & u_{k+2}^{k+4}
\end{pmatrix}}\\
&=\frac{ \det \begin{pmatrix}
    z_1^1 & z_1^3 & z_1^5 & \dots  & 0 \\
    z_2^1 & z_2^3 & z_2^5 & \dots  & 0 \\
    \vdots & \vdots & \vdots & \ddots & \vdots \\
    z_{k+1}^1 & z_{k+1}^3 & z_{k+1}^5 & \dots  & 1 \\
    1 & 1 & 1& \dots  & 0
\end{pmatrix}\det \begin{pmatrix}
    z_1^2 & z_1^4 & z_1^5 & \dots  & 0 \\
    z_2^2 & z_2^4 & z_2^5 & \dots  & 0 \\
    \vdots & \vdots & \vdots & \ddots & \vdots \\
    z_{k+1}^2 & z_{k+1}^4 & z_{k+1}^5 & \dots  & 1 \\
    1 & 1 & 1& \dots  & 0
\end{pmatrix}}
{\det \begin{pmatrix}
    z_1^1 & z_1^4 & z_1^5 & \dots  & 0 \\
    z_2^1 & z_2^4 & z_2^5 & \dots  & 0 \\
    \vdots & \vdots & \vdots & \ddots & \vdots \\
    z_{k+1}^1 & z_{k+1}^4 & z_{k+1}^5 & \dots  & 1 \\
    1 & 1 & 1& \dots  & 0
\end{pmatrix}\det \begin{pmatrix}
    z_1^2 & z_1^3 & z_1^5 & \dots  & 0 \\
    z_2^2 & z_2^3 & z_2^5 & \dots  & 0 \\
    \vdots & \vdots & \vdots & \ddots & \vdots \\
    z_{k+1}^2 & z_{k+1}^3 & z_{k+1}^5 & \dots  & 1 \\
    1 & 1 & 1& \dots  & 0
\end{pmatrix}}\\ 
&=\frac{ \det \begin{pmatrix}
    z_1^1 & z_1^3 & z_1^5 & \dots  &  z_1^{k+3} \\
    z_2^1 & z_2^3 & z_2^5 & \dots  & z_2^{k+3}  \\
    \vdots & \vdots & \vdots & \ddots & \vdots \\
    z_{k}^1 & z_{k}^3 & z_{k}^5 & \dots  & z_{k}^{k+3}  \\
    1 & 1 & 1 & \dots & 1
\end{pmatrix}\det \begin{pmatrix}
    z_1^2 & z_1^4 & z_1^5 & \dots  & z_1^{k+3} \\
    z_2^2 & z_2^4 & z_2^5 & \dots  & z_2^{k+3} \\
    \vdots & \vdots & \vdots & \ddots & \vdots \\
    z_{k}^2 & z_{k}^4 & z_{k}^5 & \dots  &  z_{k}^{k+3} \\
    1 & 1 & 1 & \dots & 1
\end{pmatrix}}
{\det \begin{pmatrix}
    z_1^1 & z_1^4 & z_1^5 & \dots  & z_1^{k+3} \\
    z_2^1 & z_2^4 & z_2^5 & \dots  & z_2^{k+3} \\
    \vdots & \vdots & \vdots & \ddots & \vdots \\
    z_{k}^1 & z_{k}^4 & z_{k}^5 & \dots  &  z_{k}^{k+3} \\
    1 & 1 & 1 & \dots & 1
\end{pmatrix}\det \begin{pmatrix}
    z_1^2 & z_1^3 & z_1^5 & \dots  & z_1^{k+3} \\
    z_2^2 & z_2^3 & z_2^5 & \dots  & z_2^{k+3} \\
    \vdots & \vdots & \vdots & \ddots & \vdots \\
    z_{k}^2 & z_{k}^3 & z_{k}^5 & \dots  &  z_{k}^{k+3} \\
    1 & 1 & 1 & \dots & 1
\end{pmatrix}}\\
&=(z^1, z^2,..., z^{k+3}).
\end{align*}
\end{proof}

We next show that this definition satisfies the invariant properties we would expect a cross ratio to have. Recall that for a nonzero parameter $\lambda$, the points $(\lambda u, \lambda v)$ and $(u, v)$ in ${\bf CP}^1$ are associated with the same point in the complex plane. Which representative we choose, however, doesn't matter.

\begin{theorem}
The cross ratio is independent of the representative chosen.
\end{theorem}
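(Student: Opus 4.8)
The plan is to lean on the one algebraic feature of the bracket $[\,\cdot\,,\ldots,\,\cdot\,]$ that is relevant here: being a determinant, it is multilinear in its $N+1$ arguments. In particular, replacing a representative $u^k$ by $\lambda_k u^k$ for a nonzero scalar $\lambda_k$ multiplies every bracket in which $u^k$ occupies a slot by exactly $\lambda_k$, and leaves every bracket not involving $u^k$ unchanged. Consequently, after rescaling all $N+3$ points, each of the four brackets appearing in the cross ratio (or in a component of the cross ratio $N$-tuple) is multiplied by the product of the $\lambda_j$ over the indices $j$ that it involves, and the statement reduces to a bookkeeping claim: for every index $k$, the number of the two numerator brackets containing $u^k$ equals the number of the two denominator brackets containing $u^k$.

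First I would check this count for the ${\bf CP}^2$ cross ratio
$$(u^1,u^2,u^3,u^4,u^5)=\frac{[u^1,u^3,u^5][u^2,u^4,u^5]}{[u^1,u^4,u^5][u^2,u^3,u^5]},$$
where one sees immediately that each of $\lambda_1,\lambda_2,\lambda_3,\lambda_4$ occurs once in the numerator and once in the denominator while $\lambda_5$ occurs twice in each; hence the overall scalar factor is $1$. The identical tally applies to the general cross ratio $(u^1,\ldots,u^{N+3})$: indices $1,2,3,4$ each lie in exactly two of the four brackets, one in the numerator and one in the denominator, whereas indices $5,\ldots,N+3$ lie in all four brackets, so every $\lambda_k$ cancels.

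Next I would run the same argument for a typical component $\dfrac{[u^2,u^i]_N^c\,[u^1,u^3]_N^c}{[u^1,u^i]_N^c\,[u^2,u^3]_N^c}$ of the cross ratio $N$-tuple, where $i$ ranges over $4,\ldots,N+3$ and $[u^p,u^q]_N^c$ denotes the bracket with $u^p$ and $u^q$ deleted. Fixing $k$ and distinguishing the cases $k\in\{1,2,3,i\}$ and $k\notin\{1,2,3,i\}$, one finds in the former that $u^k$ is present in exactly one of the two numerator brackets and one of the two denominator brackets, and in the latter that it is present in all four; either way the factor $\lambda_k$ cancels. Since this holds for each $i$, every component of the tuple is unchanged, and therefore so is the tuple itself.

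I do not expect a real obstacle here; the entire content is the multiplicity count, and the only point requiring care is that the cross ratio must be evaluated at a configuration for which all four brackets are nonzero — precisely the nondegeneracy needed elsewhere for the cross ratio to be defined at all — so that division by the denominator scalars is legitimate and the cancellation valid. I would record that hypothesis at the outset of the proof.
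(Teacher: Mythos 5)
Your proof is correct and follows essentially the same route as the paper: both arguments use the fact that scaling a column of a determinant scales the determinant, and then observe that the resulting product of $\lambda$'s (namely $\lambda_1\lambda_2\lambda_3\lambda_4\lambda_5^2\cdots\lambda_{N+3}^2$) is identical in numerator and denominator and so cancels. Your explicit multiplicity bookkeeping and your extension to the components of the cross ratio $N$-tuple are just a slightly more detailed write-up of what the paper does and then asserts for the pair ``by the same reasoning.''
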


\begin{proof}
Given $N+3$ nonzero parameters $\{\lambda_i\}_{i=1,...,N+3}$, we have

\begin{align*}
&(\lambda_1u^1, \lambda_2u^2, ...,\lambda_{N+2}u^{N+2}, \lambda_{N+3}u^{N+3})\\
&=\frac{[\lambda_1u^1, \lambda_3u^3,\lambda_5u^5,...,\lambda_{N+3}u^{N+3}][\lambda_2u^2,\lambda_4u^4,\lambda_5u^5,...,\lambda_{N+3}u^{N+3}]}{[\lambda_1u^1, \lambda_4u^4,\lambda_5u^5,...,\lambda_{N+3}u^{N+3}][\lambda_2u^2,\lambda_3u^3,\lambda_5u^5,...,\lambda_{N+3}u^{N+3}]}\\
&=\frac{\lambda_1\lambda_2\lambda_3\lambda_4\lambda_5^2...\lambda_{N+3}^2[u^1, u^3, u^5,..., u^{N+3}][u^2, u^4, u^5,..., u^{N+3}]}{\lambda_1\lambda_2\lambda_3\lambda_4\lambda_5^2...\lambda_{N+3}^2[u^1, u^4, u^5,..., u^{N+3}][u^2, u^3, u^5,..., u^{N+3}]}\\
&=\frac{[u^1, u^3, u^5,..., u^{N+3}][u^2, u^4, u^5,..., u^{N+3}]}{[u^1, u^4, u^5,..., u^{N+3}][u^2, u^3, u^5,..., u^{N+3}]}=(u^1, u^2, ..., u^{N+2}, u^{N+3})
\end{align*}
\end{proof}

By the same reasoning it is easy to see that the cross ratio pair is also independent of the representative chosen.

For our next result we need the following lemma.

\begin{lemma}
Given $N \times 1$ column vectors $v_1$, $v_2$,...,$v_N$,  along with an invertible $N \times N$ matrix $A$, we have

$$[Av_1, Av_2,..., Av_N]=\det(A)[v_1, v_2,..., v_N].$$

\end{lemma}

\begin{proof}
Denote the rows of $A$ by $a_i$ for $i=1,2,...,N$ and $V= \begin{pmatrix}v_1 & v_2 & \cdots & v_N \end{pmatrix}$. Then, since $\det(A)\det(V)=\det(AV)$, it suffices to show 

$$A\begin{pmatrix}v_1 & v_2 & \cdots & v_N \end{pmatrix}=\begin{pmatrix}Av_1 & Av_2 & \cdots & Av_N \end{pmatrix}.$$

We then have 

\begin{align*}
A\begin{pmatrix}v_1 & v_2 & \cdots & v_N \end{pmatrix}&=\begin{pmatrix}a_1 \\ a_2 \\ \vdots \\ a_N \end{pmatrix}\begin{pmatrix}v_1 & v_2 & \cdots & v_N \end{pmatrix}=
\begin{pmatrix}a_1 \cdot v_1 & a_1 \cdot v_2 & \cdots & a_1 \cdot v_N \\
a_2 \cdot v_1 & a_2 \cdot v_2 &  \cdots & a_2 \cdot v_N \\
 \vdots & \vdots & \ddots & \vdots \\
a_N \cdot v_1 & a_N \cdot v_2 &  \cdots & a_N \cdot v_N 
\end{pmatrix}\\
&=\begin{pmatrix}\begin{pmatrix}a_1 \\ a_2 \\ \vdots \\ a_N \end{pmatrix}v_1 & \begin{pmatrix}a_1 \\ a_2 \\ \vdots \\ a_N \end{pmatrix}v_2 & \cdots & \begin{pmatrix}a_1 \\ a_2\\ \vdots \\ a_N \end{pmatrix}v_N \end{pmatrix}=\begin{pmatrix}Av_1 & Av_2 & \cdots & Av_N \end{pmatrix}.
\end{align*}

\end{proof}

We next show that this cross ratio is invariant under associated matrices of linear fractional maps.

\begin{theorem}
Let $m_{\phi}$ be the associated matrix to an invertible linear fractional map $\phi$; that is, $\det m_{\phi} \neq 0$. Then the cross ratio is invariant under $m_{\phi}$.
\end{theorem}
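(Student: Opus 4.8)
The plan is to reduce everything to the Lemma just proved, applied to the $(N+1)\times(N+1)$ matrix $m_{\phi}$ acting on homogeneous coordinate vectors in $\mathbb{C}^{N+1}$. The Lemma was stated for $N\times N$ matrices and $N$ vectors, but its proof goes through verbatim with $N+1$ in place of $N$; I would either restate it in that generality or simply invoke it mutatis mutandis. The upshot is that for any choice of $N+1$ of our points, say $u^{i_1},\dots,u^{i_{N+1}}$,
$$[m_{\phi}u^{i_1},\dots,m_{\phi}u^{i_{N+1}}]=\det(m_{\phi})\,[u^{i_1},\dots,u^{i_{N+1}}].$$

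Next I would note that, because $\det m_{\phi}\neq 0$, each $m_{\phi}u^i$ is again a nonzero vector and hence represents a genuine point of ${\bf CP}^N$, and moreover that the bracket expressions appearing in the denominator of $(m_{\phi}u^1,\dots,m_{\phi}u^{N+3})$ are nonzero precisely when the corresponding brackets for the $u^i$ are nonzero, since they differ only by the nonzero factor $\det m_{\phi}$; so the cross ratio of the images is defined exactly when the cross ratio of the originals is, and no $0/0$ arises. Substituting the displayed identity into the definition
$$(m_{\phi}u^1,\dots,m_{\phi}u^{N+3})=\frac{[m_{\phi}u^1,m_{\phi}u^3,m_{\phi}u^5,\dots][m_{\phi}u^2,m_{\phi}u^4,m_{\phi}u^5,\dots]}{[m_{\phi}u^1,m_{\phi}u^4,m_{\phi}u^5,\dots][m_{\phi}u^2,m_{\phi}u^3,m_{\phi}u^5,\dots]},$$
each of the four brackets picks up one factor of $\det(m_{\phi})$, so the numerator and denominator are each multiplied by $\det(m_{\phi})^2$, and these cancel, leaving $(u^1,\dots,u^{N+3})$.

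I do not expect a genuine obstacle here; the only points requiring care are bookkeeping ones, namely confirming that the Lemma is being used at the correct size $N+1$, and being explicit that invertibility of $m_{\phi}$ is exactly what guarantees both that the images still lie in ${\bf CP}^N$ and that the $\det(m_{\phi})$ factors are cancellable. It is worth remarking, as a corollary of the same one-line computation, that since each component of the cross ratio $N$-tuple $(u^1,\dots,u^{N+3})_N$ is likewise a ratio of a product of two brackets over a product of two brackets, it too is invariant under $m_{\phi}$; combined with the relation $\phi(z)\sim m_{\phi}u$ whenever $z\sim u$, this yields invariance of the cross ratio under the map $\phi$ itself.
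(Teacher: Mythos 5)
Your proposal is correct and follows essentially the same route as the paper: apply the multiplicativity lemma $[Av_1,\dots,Av_k]=\det(A)[v_1,\dots,v_k]$ to each of the four brackets, observe that $\det(m_{\phi})^2$ appears in both numerator and denominator, and cancel using invertibility. Your additional remarks (that the lemma must be read at size $N+1$, and that nonvanishing of the denominators is preserved) are sensible bookkeeping that the paper leaves implicit, but they do not change the argument.
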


\begin{proof}
Note since $[Av_1, Av_2,..., Av_N]=\det(A)[v_1, v_2,..., v_N]$, we have 

$$[m_{\phi}u^1, m_{\phi}u^2,..., m_{\phi}u^k]=\det (m_{\phi}) [u^1, u^2,..., u^k].$$

Thus 

\begin{align*}
&(m_{\phi}u^1,m_{\phi}u^2,...,m_{\phi}u^{N+3})\\
&=\frac{[m_{\phi}u^1, m_{\phi}u^3,m_{\phi}u^5,...,m_{\phi}u^{N+3}][m_{\phi}u^2,m_{\phi}u^4,m_{\phi}u^5,...,m_{\phi}u^{N+3}]}{[m_{\phi}u^1, m_{\phi}u^4,m_{\phi}u^5,...,m_{\phi}u^{N+3}][m_{\phi}u^2,m_{\phi}u^3,m_{\phi}u^5,...,m_{\phi}u^{N+3}]}\\
&=\frac{\det(m_{\phi})^2[u^1, u^3, u^5,...,u^{N+3}][u^2,u^4,u^5,...,u^{N+3}]}{\det(m_{\phi})^2[u^1, u^4,u^5,...,u^{N+3}][u^2,u^3,u^5,...,u^{N+3}]}\\
&=\frac{[u^1, u^3,u^5,...,u^{N+3}][u^2,u^4,u^5,...,u^{N+3}]}{[u^1, u^4,u^5,...,u^{N+3}][u^2,u^3,u^5,...,u^{N+3}]}=(u^1, u^2, ...,u^{N+2}, u^{N+3})
\end{align*}
where we can divide by $\det(m_{\phi})$ since we presume $m_{\phi}$ is invertible.
\end{proof}

\section{Transitivity of linear fractional maps in several complex variables.}

It is well known that linear fractional maps acting on $\mathbb{C}$ are $3$-transitive but not $4$-transitive. That is, for six points $z^1$, $z^2$, $z^3$, $w^1$, $w^2$, and $w^3$ in $\mathbb{C}$ where the $z^i$'s are pairwise distinct as are the $w^i$'s, there is a unique linear fractional map $\chi$ such that $\chi(z^i)=w^i$ for $i=1,2,3$. The usual way to achieve this result is to send the triple $(z^1, z^2, z^3)$ to the standardized points $(1,0, \infty)$ where $\infty$ represents the ``north pole" on the Riemann sphere. In homogeneous coordinates, these standardized points are associated with $\begin{pmatrix}1 \\ 1\end{pmatrix}$, $\begin{pmatrix}0 \\ 1\end{pmatrix}$ , and $\begin{pmatrix}1 \\ 0\end{pmatrix}$ , respectively. Thus, for the cross ratio as a function of $z$ with associated point $u$ given by $\phi(z)=\frac{(z-z^2)(z^1-z^3)}{(z-z^3)(z^1-z^2)}$ , we may write the point associated with $\phi (z)$ as 

$$m_{\phi} u=\begin{pmatrix}(z-z^2)(z^1-z^3) \vspace{2mm} \\   (z-z^3)(z^1-z^2) \end{pmatrix}.$$

It is clear that this sends our triple $(z^1, z^2, z^3)$ to the points $(1,0, \infty)$, respectively. Note that taking two distinct points $z^1$ and $z^2$ in $\mathbb{C}$ implies that the associated points are linearly independent in $\mathbb{C}^2$. However, it is not true that taking three distinct points in $\mathbb{C}^N$ implies the associated points will be linearly independent in $\mathbb{C}^{N+1}$. They could be collinear, for example. This potential obstruction will necessitate an additional assumption of linear independence that was not needed in $\mathbb{C}$. 

We would like our cross ratio $N$-tuple to be a generalized formula for a multidimensional linear fractional map that maps $N+2$ points to certain pre-specified points analogous to the triple $(1,0, \infty)$. To achieve this, we consider an $N+2$-tuple in which $N+1$ of the points have linearly independent associated points which are sent to the standard basis vectors $\{e_i\}_{i=1}^{N+1}$ and the remaining associated point is sent to $\sum_{i=1}^{N+1}e_i$. To begin, we see how to do this in $\mathbb{C}^2$ and generalize. 

Let $u^i \in {\bf CP}^2$ be associated with $z^i \in \mathbb{C}^2$ for $i=2,...,5$ and $u$ associated with $z$. Define the linear fractional map $\phi: \mathbb{C}^2 \rightarrow \mathbb{C}^2$ by

\begin{align*}
\phi(z)&=(z, z^2 , z^3, z^4,z^5)_2=(u, u^2, u^3, u^4, u^5)_2\\
&=\left( \frac{[u, u^3,u^5][u^2,u^4,u^5][u^2,u^3,u^4]}{[u, u^4,u^5][u^2,u^3,u^4][u^2,u^3,u^5]}, \frac{[u,u^3,u^4][u^2,u^3,u^5][u^2,u^4,u^5]}{[u, u^4,u^5][u^2,u^3,u^4][u^2,u^3,u^5]} \right).
\end{align*}

If the point $u \in {\bf CP}^2$ is associated with $z \in \mathbb{C}^2$, then the point $v \in {\bf CP}^2$ associated with $w=\phi(z) \in \mathbb{C}^2$ is given by

\begin{equation}\label{cr}
v=m_{\phi}u=\begin{pmatrix}
        [u, u^3,u^5][u^2,u^4,u^5][u^2,u^3,u^4] \\
        [u,u^3,u^4][u^2,u^3,u^5][u^2,u^4,u^5]\\
        [u, u^4,u^5][u^2,u^3,u^4][u^2,u^3,u^5]
     \end{pmatrix}.
\end{equation}

We recall that $[u^i, u^j, u^k]=0$ when any of $i,j,k$ are equal or, more generally, when there is linear dependence. To keep the cross ratio map given above well-defined, we introduce the following hypothesis.

\begin{definition}
Let the set $\{u^i\}_{i=3}^5$ in ${\bf CP}^2$ form a linearly independent set. We make the assumption that $u^2=\alpha u^3+\beta u^4+\gamma u^5$ where $\alpha$, $\beta$, and $\gamma$ are all non-zero. We will call this {\it the independence hypothesis}. 
\end{definition}

Considering the three-dimensional space $(\alpha, \beta, \gamma)$, the independence hypothesis asks only that we omit a set of measure zero. We generalize this hypothesis to higher dimensions in the natural way.

With this in mind, it is clear to see that we have $m_{\phi}u^2=(1,1,1)$, $m_{\phi}u^3=(0,0,1)$, $m_{\phi}u^4=(1,0,0)$, and $m_{\phi}u^5=(0,1,0)$ as desired where $(1,1,1)$ is associated with $(1,1)$, $(0,0,1)$ with $(0,0)$, and vectors such as $(1,0,0)$, which we will denote by $e_{1, \infty}$, and $(0,1,0)$, which we will denote by $e_{2, \infty}$, correspond to the points in the hyperplane at infinity that are tangent to $(0,1)$ and  $(1, 0)$, respectively. In general, our goal will then be to map $N+2$ points associated with $N+2$ vectors in ${\bf CP}^N$ satisfying the independence hypothesis to a set of $N+2$ standardized points in $\mathbb{C}^N$ with associated points $\sum_{k=1}^{N+1}e_k$ and $\{e_k\}_{k=1}^{N+1}$ where $e_i$ is the vector that is zero on every coordinate except the $i$th coordinate where it takes the value of $1$. 

\begin{theorem}
Given $N+2$ distinct elements $z^2$, $z^3$,...,$z^{N+3}$ in $\mathbb{C}^N$ with respective associated points $u^2$, $u^3$,...,$u^{N+3}$ in ${\bf CP}^N$ that satisfy the independence hypothesis, there exists a unique linear fractional map $\phi$ such that $m_{\phi}u^2=(1,1,...,1)=\sum_{k=1}^{N+1}e_k$, $m_{\phi}u^3=e_{N+1}$, $m_{\phi}u^4=e_1$, $m_{\phi}u^5=e_2$, and $m_{\phi}u^i=e_{i-3}$ for $i>5$.
\end{theorem}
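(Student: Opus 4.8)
The plan is to construct $\phi$ explicitly via its associated matrix and then argue uniqueness separately. For the existence half, recall the independence hypothesis says $\{u^3, u^4, \ldots, u^{N+3}\}$ (that is, the $N+1$ points $u^i$ for $i \neq 2$) form a basis of $\mathbb{C}^{N+1}$ and that $u^2 = \sum_{i=3}^{N+3} \alpha_i u^i$ with every $\alpha_i \neq 0$. First I would let $P$ be the $(N+1)\times(N+1)$ matrix whose columns are $u^4, u^5, u^6, \ldots, u^{N+3}, u^3$ in that order (so that the $\ell$-th column for $\ell = 1, \ldots, N$ is $u^{\ell+3}$ and the last column is $u^3$); by the independence hypothesis $\det P \neq 0$. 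Then $P^{-1}$ sends each of those basis vectors to the corresponding standard basis vector: $P^{-1}u^{\ell+3} = e_\ell$ for $\ell = 1, \ldots, N$ and $P^{-1}u^3 = e_{N+1}$. This already gives $m_{\phi}u^3 = e_{N+1}$, $m_{\phi}u^4 = e_1$, $m_{\phi}u^5 = e_2$, and $m_{\phi}u^i = e_{i-3}$ for $i > 5$, exactly the required images, for $\phi$ the linear fractional map with $m_\phi$ any nonzero scalar multiple of $P^{-1}$.

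The remaining freedom is the diagonal rescaling: replacing $P^{-1}$ by $\Lambda P^{-1}$ for a diagonal matrix $\Lambda = \mathrm{diag}(\lambda_1, \ldots, \lambda_{N+1})$ still sends each $u^i$ ($i \neq 2$) to a scalar multiple of the prescribed $e_j$, hence to the same point of ${\bf CP}^N$, so this does not disturb the conditions already met. The point is to choose $\Lambda$ so that $m_\phi u^2$ becomes a scalar multiple of $\sum_{k=1}^{N+1} e_k$. Compute $P^{-1}u^2 = \sum_{i} \alpha_i P^{-1}u^i$; in the ordered basis this is the vector with entries $\alpha_{\ell+3}$ in slots $\ell = 1, \ldots, N$ and $\alpha_3$ in slot $N+1$ — call this vector $(\beta_1, \ldots, \beta_{N+1})$, and note every $\beta_k \neq 0$ precisely because of the independence hypothesis. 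Setting $\lambda_k = 1/\beta_k$ gives $\Lambda P^{-1} u^2 = (1,1,\ldots,1) = \sum_k e_k$, as desired, while $\Lambda P^{-1}$ is still invertible, so it is the associated matrix of an invertible linear fractional map $\phi$. This establishes existence.

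For uniqueness, suppose $\psi$ is another linear fractional map with the same images. Then $m_\psi u^i$ is a nonzero scalar multiple of $m_\phi u^i$ for each $i = 2, \ldots, N+3$, so $m_\phi m_\psi^{-1}$ (or $m_\psi m_\phi^{-1}$) fixes each of the points $\sum_k e_k, e_{N+1}, e_1, e_2, \ldots$ as a point of ${\bf CP}^N$. A matrix that sends each standard basis vector $e_k$ to a scalar multiple of itself must be diagonal; a diagonal matrix that additionally fixes $\sum_k e_k$ projectively must be a scalar multiple of the identity; hence $m_\psi$ and $m_\phi$ differ by a scalar and define the same linear fractional map. I would cite here the already-established facts $m_{\phi_1 \circ \phi_2} = m_{\phi_1} m_{\phi_2}$ and $m_{\phi^{-1}} = (m_\phi)^{-1}$ to make this rigorous.

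The main obstacle is bookkeeping rather than conceptual: one must keep the indexing of the standard basis vectors straight (the somewhat awkward convention $u^3 \mapsto e_{N+1}$, $u^4 \mapsto e_1$, $u^5 \mapsto e_2$, $u^i \mapsto e_{i-3}$ for $i>5$ forces the reordered basis above) and one must be careful that the map produced is genuinely of the linear-fractional form \eqref{LFM}, i.e. that an arbitrary invertible $(N+1)\times(N+1)$ matrix of the block shape $\begin{pmatrix} A & B \\ C^* & D\end{pmatrix}$ arises — here one should note that \emph{any} invertible matrix can be written in that block form (the bottom row's first $N$ entries are the conjugates of the entries of $C$), so $\Lambda P^{-1}$ does correspond to a linear fractional map. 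It may also be worth remarking that the independence hypothesis is exactly what is needed: invertibility of $P$ gives a well-defined map, and the nonvanishing of all $\alpha_i$ (equivalently all $\beta_k$) is exactly what lets us rescale $m_\phi u^2$ to $(1,\ldots,1)$ — if some $\alpha_i$ vanished, $m_\phi u^2$ would lie on a coordinate hyperplane and no diagonal rescaling could fix this.
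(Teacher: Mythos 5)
Your proof is correct, but the existence half takes a genuinely different route from the paper. The paper's proof \emph{defines} $\phi$ to be the cross ratio $N$-tuple $(u, u^2, \ldots, u^{N+3})_N$ viewed as a function of $u=u^1$, reads off the associated matrix as the column of ratios $\frac{[u^2,u^i]_N^c}{[u,u^i]_N^c}$, and then verifies by direct substitution that each $u^j$ lands on the prescribed standardized point (the numerator and denominator coincide when $u=u^2$, giving $(1,\ldots,1)$ exactly, while repeated columns kill all but one entry when $u=u^j$ for $j\geq 3$). You instead build the matrix from scratch as $\Lambda P^{-1}$, where $P$ has the reordered basis $u^4,\ldots,u^{N+3},u^3$ as columns and $\Lambda$ is the diagonal rescaling by the reciprocals of the coordinates of $u^2$ in that basis. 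Your construction is more elementary and makes it completely transparent where each part of the independence hypothesis is used (invertibility of $P$ for the basis vectors, nonvanishing of the $\alpha_i$ for the rescaling of $u^2$); you are also more careful than the paper in treating the images projectively and in noting that any invertible $(N+1)\times(N+1)$ matrix has the required block form. What you lose is the point the paper is trying to make: the theorem is meant to exhibit the cross ratio $N$-tuple itself as the standardizing map, generalizing the fact that $z\mapsto(z,z^1,z^2,z^3)$ sends $(z^1,z^2,z^3)$ to $(1,0,\infty)$, and that connection disappears in your argument (of course, by your own uniqueness clause the two constructions must agree up to scalar). The uniqueness halves are essentially identical: both reduce to the observation that a matrix fixing all $e_k$ and $\sum_k e_k$ projectively is a scalar multiple of the identity.
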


\begin{proof}
Define the linear fractional map $\phi$ by

\begin{align*}\phi(z)&=(z^1, z^2,..., z^{N+2}, z^{N+3})_N \\
&=(u^1, u^2,..., u^{N+2}, u^{N+3})_N=\left( \bigg\{\frac{[u^2, u^i]_N^c[u, u^3]_N^c}{[u, u^i]_N^c[u^2, u^3]_N^c}\bigg\}_{i=4,...,N+3}\right).
\end{align*}

We may then write the point associated with $\phi(z)$ as

$$v=m_{\phi}u=\begin{pmatrix}\frac{[u^2, u^4]_N^c}{[u, u^4]_N^c} & \frac{[u^2, u^5]_N^c}{[u, u^5]_N^c}  & \cdots &  \frac{[u^2, u^3]_N^c}{[u^,u^3]_N^c} \end{pmatrix}^T.$$

We recall that $[u^i, u^j]_N^c=[u,...,u^{i-1},u^{i+1},...,u^{j-1},u^{j+1},...u^{N+3}]=0$ when $u=u^k$ for some integer $1<k<N+3$, $k \neq i,j$. In particular, note that none of the denominators in the entries of $v$ are zero. Thus we have $m_{\phi}u^2=\sum_{k=1}^{N+1}e_k$, $m_{\phi}u^3=e_{N+1}$, $m_{\phi}u^4=e_1$, $m_{\phi}u^5=e_2$, and $m_{\phi}u^k=e_{k-3}$. 

We next demonstrate uniqueness. Suppose there are two linear fractional maps $\phi$ and $\psi$ such that the associated matrices $m_{\phi}$ and $m_{\psi}$ send each $u^i$ as above. Then the associated matrix $M=m_{\phi}m_{\psi}^{-1}$ fixes $e_i$ and $p=\sum_{i=1}^Ne_i$. Denote $M$ by

$$ M=\begin{pmatrix}
    a_{11} & a_{12} & a_{13} & \dots  & a_{1N} \\
    a_{21} & a_{22} & a_{23} & \dots  & a_{2N} \\
    \vdots & \vdots & \vdots & \ddots & \vdots \\
    a_{N1} & a_{N2} & a_{N3} & \dots  & a_{NN}
\end{pmatrix}.
$$

Then $Me_i=e_i$ implies $a_{ij}=0$ for $i \neq j$ and $Mp=p$ implies $a_{ii}=a_{jj}$ for all $1 \leq i,j \leq N$. We conclude that $M=a_{11}I$ where $I$ is the identity which implies $m_{\phi}=a_{11}m_{\psi}$ and thus $\phi=\psi$.
\end{proof}

To generalize in several variables, we use transitivity in a more restricted sense. In particular, we make the additional assumption that the initial data points in $\mathbb{C}^N$ have associated points in ${\bf CP}^N$ that satisfy the independence hypothesis. We next show that, in this sense of transitivity, linear fractional maps in $\mathbb{C}^N$ are $(N+2)$-transitive but not $(N+3)$-transitive. We began with some theorems.

\begin{theorem}
Given $N+2$ pairwise distinct points $\{z^i\}_{i=2}^{N+3}$ in $\mathbb{C}^N$ with associated points $\{u^i\}_{i=2}^{N+3}$ and $N+2$ pairwise distinct points $\{w^i\}_{i=2}^{N+3}$ in $\mathbb{C}^N$ with associated points $\{v^i\}_{i=2}^{N+3}$ such that the sets $\{u^i\}_{i=2}^{N+3}$ and $\{v^i\}_{i=2}^{N+3}$ satisfy the independence hypothesis, there exists a unique linear fractional map $\chi(z)$ such that $\chi(z^i)=w^i$ for $i=2,...,N+3$.
\end{theorem}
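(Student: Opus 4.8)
The plan is to reduce this statement to the preceding theorem, which produces, for any data set satisfying the independence hypothesis, a \emph{unique} linear fractional map carrying the associated points in ${\bf CP}^N$ to the fixed standard configuration $\sum_{k=1}^{N+1}e_k$, $e_{N+1}$, $e_1$, $e_2$, $e_3,\ldots,e_N$. First I would invoke that theorem twice. Applied to $\{z^i\}_{i=2}^{N+3}$ with associated points $\{u^i\}_{i=2}^{N+3}$, it gives a unique linear fractional map $\phi$ with $m_{\phi}u^2=\sum_{k=1}^{N+1}e_k$, $m_{\phi}u^3=e_{N+1}$, $m_{\phi}u^4=e_1$, $m_{\phi}u^5=e_2$, and $m_{\phi}u^i=e_{i-3}$ for $i>5$. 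Applied to $\{w^i\}_{i=2}^{N+3}$ with associated points $\{v^i\}_{i=2}^{N+3}$, it gives the analogous unique map $\psi$ sending each $v^i$ to exactly the same standard point.

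Next I would observe that $m_{\phi}$ and $m_{\psi}$ are invertible. By the independence hypothesis the vectors $u^3,u^4,\ldots,u^{N+3}$ form a basis of $\mathbb{C}^{N+1}$, and $m_{\phi}$ sends them to $e_{N+1},e_1,e_2,e_3,\ldots,e_N$, i.e.\ onto the standard basis; so $m_{\phi}$ takes a basis to a basis and is invertible, and the same argument applies to $m_{\psi}$. Hence $\psi$ is an invertible linear fractional map, so $\psi^{-1}$ exists with $m_{\psi^{-1}}=(m_{\psi})^{-1}$. Define $\chi=\psi^{-1}\circ\phi$. Then $m_{\chi}=m_{\psi^{-1}}m_{\phi}=(m_{\psi})^{-1}m_{\phi}$ is invertible, so $\chi$ is genuinely of the required linear fractional form, and for each $i$ we get $m_{\chi}u^i=(m_{\psi})^{-1}(m_{\phi}u^i)=(m_{\psi})^{-1}(\text{$i$-th standard point})=v^i$. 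Reading this back in $\mathbb{C}^N$ gives $\chi(z^i)=w^i$ for $i=2,\ldots,N+3$, which is existence.

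For uniqueness, suppose $\chi'$ is another linear fractional map with $\chi'(z^i)=w^i$. Then $m_{\chi'}u^i$ represents $v^i$, so $m_{\psi}m_{\chi'}u^i=m_{\psi\circ\chi'}u^i$ represents $m_{\psi}v^i$, the $i$-th standard point; that is, $\psi\circ\chi'$ carries every $u^i$ to the standard configuration. The uniqueness half of the preceding theorem then forces $\psi\circ\chi'=\phi$, whence $\chi'=\psi^{-1}\circ\phi=\chi$.

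The step I would be most careful about is the bookkeeping between a point of ${\bf CP}^N$ and its homogeneous representatives: $m_{\chi}u^i$ need only be a nonzero scalar multiple of $v^i$, so all the displayed equalities must be read as equalities of points in ${\bf CP}^N$ (equivalently, of linear fractional maps, whose associated matrices are determined only up to scalar). The one place a hypothesis is genuinely used is the invertibility of $m_{\psi}$, so that $\psi^{-1}\circ\phi$ is again of fractional type; this is exactly the basis-to-basis observation above, which rests on the independence hypothesis. Everything else is formal manipulation with $m_{\phi_1\circ\phi_2}=m_{\phi_1}m_{\phi_2}$ and $m_{\phi^{-1}}=(m_{\phi})^{-1}$.
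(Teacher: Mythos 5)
Your proposal is correct and follows essentially the same route as the paper: apply the preceding standardization theorem to both point sets to get $\phi$ and $\psi$, set $\chi=\psi^{-1}\circ\phi$, and derive uniqueness from the uniqueness clause of that theorem via $m_{\phi}=m_{\psi}m_{\sigma}$. Your added justification that $m_{\psi}$ is invertible (basis to basis, via the independence hypothesis) is a point the paper leaves implicit, but it does not change the argument.
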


\begin{proof}
By Theorem $4$, there exist maps $\phi$ and $\psi$ such that $m_{\phi}u^2=m_{\psi}v^2=\sum_{k=1}^{N+1}e_k$, $m_{\phi}u^3=m_{\psi}v^3=e_{N+1}$, $m_{\phi}u^4=m_{\psi}v^4=e_1$, $m_{\phi}u^5=m_{\psi}v^5=e_2$, and $m_{\phi}u^k=m_{\psi}v^k=e_{k-3}$ otherwise. Define $m_{\gamma}=m_{\psi}^{-1}m_{\phi}$. This sends $u^i$ to $v^i$ for $i=2,...,N+3$. To show uniqueness, suppose $m_{\sigma}$ maps $u^i$ to $v^i$ for $i=2,...,N+3$. Then $m_{\phi}$ and $m_{\psi}m_{\sigma}$ each send $u^i$ to the same values for $i=2,...,N+3$. Thus by Theorem $4$, we have $m_{\phi}=m_{\psi}m_{\sigma}$ which implies $m_{\gamma}=m_{\psi}^{-1}m_{\phi}=m_{\sigma}$ from which we conclude that $\gamma=\sigma$ as desired.
\end{proof}

\begin{theorem}
If $m_{\phi}$ fixes $N+2$ distinct points $\{u^i\}_1^{N+2}$ satisfying the independence hypothesis, then $\phi$ is the identity.
\end{theorem}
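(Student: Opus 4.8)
The plan is to read the hypothesis projectively: if the associated matrix $m_{\phi}$ fixes the point of ${\bf CP}^N$ determined by $u^i$, then $m_{\phi}u^i$ and $u^i$ represent the same point, hence $m_{\phi}u^i=\lambda_i u^i$ for some nonzero scalar $\lambda_i$; that is, each $u^i$ is an eigenvector of $m_{\phi}$. The independence hypothesis then supplies exactly the structure we need: after relabeling, $u^2,\dots,u^{N+2}$ are linearly independent and therefore a basis of $\mathbb{C}^{N+1}$, while the remaining point satisfies $u^1=\sum_{j=2}^{N+2}c_j u^j$ with every $c_j$ nonzero.

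Relative to the basis $\{u^j\}_{j=2}^{N+2}$, the eigenvector relations $m_{\phi}u^j=\lambda_j u^j$ say that $m_{\phi}$ is the diagonal matrix $\operatorname{diag}(\lambda_2,\dots,\lambda_{N+2})$. I would then apply $m_{\phi}$ to the distinguished point $u^1$ in two ways:
\[
\sum_{j=2}^{N+2}c_j\lambda_j u^j \;=\; m_{\phi}u^1 \;=\; \lambda_1 u^1 \;=\; \sum_{j=2}^{N+2}\lambda_1 c_j u^j .
\]
Since the $u^j$ form a basis, the coefficients must match: $c_j\lambda_j=\lambda_1 c_j$ for each $j$, and as $c_j\neq 0$ this forces $\lambda_j=\lambda_1$ for all $j$. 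Writing $\lambda$ for the common value, we conclude $m_{\phi}=\lambda I$.

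It remains to observe that $m_{\phi}=\lambda I$ is the associated matrix of $\phi(z)=\frac{\lambda z}{\lambda}=z$, so $\phi$ is the identity. (Equivalently, one can bypass the coordinate computation by invoking Theorem 4: after relabeling so the $u^i$ play the roles of the points there, Theorem 4 furnishes a linear fractional map $\psi$ with $m_{\psi}$ carrying $\{u^i\}$ to the standard configuration $\sum_k e_k,\ \{e_k\}$; then $m_{\psi}m_{\phi}m_{\psi}^{-1}$ fixes that configuration, and the argument already used in the uniqueness half of Theorem 4 shows it equals a scalar multiple of $I$, whence so does $m_{\phi}$.) I do not expect a genuine obstacle here; the only points demanding care are that ``fixes'' must be interpreted up to a scalar that can a priori vary with the point, and the bookkeeping of which of the $N+2$ points plays the role of the distinguished linear combination in the independence hypothesis.
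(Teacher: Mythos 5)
Your proof is correct, but it takes a genuinely different (and more self-contained) route than the paper. The paper's proof is a one-liner: the identity map also fixes the $N+2$ points, so the uniqueness clause of Theorem~5 forces $m_{\phi}$ to agree with the identity's associated matrix, hence $\phi=\mathrm{id}$. You instead argue directly from linear algebra: reading ``fixes'' projectively gives $m_{\phi}u^i=\lambda_i u^i$, the $N+1$ independent points form an eigenbasis of $\mathbb{C}^{N+1}$, and the distinguished point --- whose expansion has \emph{all} coefficients nonzero by the independence hypothesis --- forces all eigenvalues to coincide, so $m_{\phi}=\lambda I$ and $\phi=\mathrm{id}$. Your version buys two things the paper's does not: it is independent of Theorems~4 and~5 (so it survives even if one worries about the bookkeeping in those proofs), and it makes visible exactly where the independence hypothesis is used and why the nonvanishing of the coefficients is essential. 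It also correctly flags the subtlety that ``fixes'' can only mean ``up to a point-dependent scalar'' in ${\bf CP}^N$, a point the paper glosses over. Your parenthetical alternative (conjugate by the map from Theorem~4 and rerun its uniqueness computation) is essentially the paper's argument unwound one level. The only cost of your approach is length; both are valid.
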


\begin{proof}
Suppose that $m_{\phi}$ fixes $N+2$ distinct points $\{u^i\}_1^{N+2}$ satisfying the independence hypothesis, then, since the identity also fixes these points, by Theorem $5$ we have that $m_{\phi}$ is equal to the identity and thus $\phi$ is the identity.
\end{proof}

We can thus conclude that our linear fractional maps are $(N+2)$-transitive but they are not $(N+3)$-transitive.

\section{More to explore.}

The author invites the reader to investigate  this generalized definition further. For example, in one variable, we have a well-known criteria to determine when the cross ratio is real.
\begin{theorem}
The cross ratio $(z^1,z^2,z^3,z^4)$  is real if and only if the four points lie on a circle (where we let a line be a circle of infinite radius). 
\end{theorem}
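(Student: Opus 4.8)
The plan is to exploit two facts: the invariance of the cross ratio under linear fractional maps, established earlier in the excerpt, and the classical fact that a linear fractional map of $\mathbb{C} \cup \{\infty\}$ carries circles-and-lines to circles-and-lines, where a line is regarded as a circle through $\infty$. Given the four distinct points, I would first consider the linear fractional map $T(z) = (z, z^2, z^3, z^4)$, which is well defined since $z^2, z^3, z^4$ are distinct, and which satisfies $T(z^2) = 1$, $T(z^3) = 0$, and $T(z^4) = \infty$. By the invariance of the cross ratio, $(z^1, z^2, z^3, z^4) = T(z^1)$, and because all four points are distinct, the numerator and denominator of the cross ratio are both nonzero, so this value is a finite nonzero complex number.

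Next I would observe that the three image points $T(z^2), T(z^3), T(z^4)$, namely $1, 0, \infty$, all lie on $\mathbb{R} \cup \{\infty\}$, which is a circle-or-line. Since $T$ is a bijection of $\mathbb{C} \cup \{\infty\}$ sending circles-and-lines to circles-and-lines, the preimage $T^{-1}(\mathbb{R} \cup \{\infty\})$ is again a circle-or-line, and it contains $z^2, z^3, z^4$. As three distinct points determine a unique circle-or-line, $T^{-1}(\mathbb{R} \cup \{\infty\})$ is precisely the unique circle-or-line $C$ passing through $z^2, z^3, z^4$.

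The argument then closes quickly: $(z^1, z^2, z^3, z^4) = T(z^1)$ is real if and only if it lies in $\mathbb{R} \cup \{\infty\}$ (and it cannot equal $\infty$, since the four points are distinct), which holds if and only if $z^1 \in T^{-1}(\mathbb{R} \cup \{\infty\}) = C$. Hence the cross ratio is real exactly when $z^1$ lies on the circle-or-line through $z^2, z^3, z^4$, that is, exactly when the four points are concyclic, allowing the degenerate circle of infinite radius, namely a line.

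I expect the step needing the most care is the lemma that linear fractional maps preserve the family of circles-and-lines. One may either cite it from a standard reference such as \cite{Cowen}, or prove it in a few lines by decomposing a general linear fractional map into translations, rotations, dilations, and the inversion $z \mapsto 1/z$, and checking that the general equation $\alpha |z|^2 + \overline{\beta} z + \beta \overline{z} + \delta = 0$ with $\alpha, \delta \in \mathbb{R}$ is preserved under each. A fully self-contained alternative that avoids this lemma is to argue directly: $(z^1, z^2, z^3, z^4) \in \mathbb{R}$ is equivalent to $\arg \frac{z^1 - z^3}{z^1 - z^4} \equiv \arg \frac{z^2 - z^3}{z^2 - z^4} \pmod{\pi}$, i.e. the directed angle subtended by the pair $\{z^3, z^4\}$ is the same at $z^1$ and at $z^2$, which is exactly the inscribed-angle criterion for concyclicity or collinearity; this route is elementary but requires a short case analysis on the configuration of the points.
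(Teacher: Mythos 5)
Your proposal is correct and follows essentially the same route as the paper's proof: map three of the points to $1,0,\infty$ via the cross ratio viewed as a linear fractional map of its first argument, invoke the circle-preserving property of such maps, and read off that reality of the cross ratio is equivalent to $z^1$ lying on the circle-or-line through the other three points. Your version is in fact slightly more careful about the indexing and about excluding the value $\infty$, and the alternative inscribed-angle argument you sketch is a valid elementary substitute, but no substantive difference in method.
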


\begin{proof}
Consider the cross ratio as a function of the first argument, $\phi(z)=(z,z^1,z^2,z^3)$. We saw that $\phi$ maps the triple $(z^1,z^2,z^3)$ to $(1,0,\infty)$, respectively. Since linear fractional maps send circles to circles, we conclude that $\phi$ sends our circle containing the triple $(z^1, z^2, z^3)$ to the extended real line. Now consider the map $\phi^{-1} \left(\phi (z)\right)=z$. It is known that linear fractional maps are bijective and thus $\phi(z)$ is real if and only if $z=\phi^{-1}\left( \phi(z\right))$ lies on the circle containing our triple $(z^1, z^2, z^3)$ which is the image of the extended real line under our map $\phi^{-1}$. 
\end{proof}

In order for this theorem to extend to our generalized cross ratio $N$-tuple, it will take an upgrade of some geometric facts about these generalized maps. We appeal to results by Cowen and MacCluer (see \cite{Cowen}, Section 3 for more details.). In particular, one finds more flexibility when working in higher dimensions. The appropriate generalization of circles in this situation is to consider {\it ellipsoids}, where an ellipsoid is a translate of the image of the unit ball under an invertible complex linear transformation. We recall Theorem $6$ from [1].

\begin{theorem}
If $\phi$ is a one-to-one linear fractional map defined on a ball $\overline{\mathbb{B}}$ in $\mathbb{C}^N$, then $\phi (\mathbb{B})$ is an ellipsoid.
\end{theorem}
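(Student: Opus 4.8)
The plan is to describe both $\mathbb{B}$ and its image $\phi(\mathbb{B})$ as the regions cut out by Hermitian forms in homogeneous coordinates, and then read the shape of $\phi(\mathbb{B})$ off the transported form. Write $\mathbb{B}=\{z\in\mathbb{C}^{N}:|z|^{2}<1\}$ and let $J$ be the $(N+1)\times(N+1)$ Hermitian matrix $\operatorname{diag}(1,\ldots,1,-1)$. For $z\in\mathbb{C}^{N}$ with associated point $u=(z,1)$ we have $u^{*}Ju=|z|^{2}-1$, so $z\in\mathbb{B}$ exactly when $u^{*}Ju<0$; this inequality does not depend on the representative, since replacing $u$ by $\lambda u$ with $\lambda\neq 0$ scales $u^{*}Ju$ by $|\lambda|^{2}>0$. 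Because $\phi$ is defined on $\overline{\mathbb{B}}$, the denominator $\langle z,C\rangle+D$ does not vanish there, and because $\phi$ is one-to-one on the open set $\mathbb{B}$, the associated matrix $m_{\phi}$ is invertible: a singular $m_{\phi}$ would confine $\phi(\overline{\mathbb{B}})$ to a proper projective subspace, which by invariance of domain cannot happen for a continuous injection on an open subset of $\mathbb{C}^{N}$.

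First I would transport $J$ by $m_{\phi}^{-1}$. If $w=\phi(z)$, $u=(z,1)$, and $v=m_{\phi}u$, then $u=m_{\phi}^{-1}v$ and $u^{*}Ju=v^{*}Kv$ with $K=(m_{\phi}^{-1})^{*}Jm_{\phi}^{-1}$; this $K$ is Hermitian and, by Sylvester's law of inertia, has the same signature $(N,1)$ as $J$. The last coordinate of $v=m_{\phi}(z,1)$ is $\langle z,C\rangle+D\neq 0$, so I may normalize $v=(w,1)$ and divide $v^{*}Kv<0$ by $|\langle z,C\rangle+D|^{2}>0$; conversely, any $w$ with $(w,1)^{*}K(w,1)<0$ equals $\phi(z)$ for the point $z\in\mathbb{B}$ associated with $m_{\phi}^{-1}(w,1)$, whose last coordinate is forced to be nonzero by that inequality. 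Hence
$$\phi(\mathbb{B})=\{\,w\in\mathbb{C}^{N}:(w,1)^{*}K(w,1)<0\,\}.$$
Writing $K$ in block form with Hermitian $N\times N$ corner $P$, a vector $q\in\mathbb{C}^{N}$, and a real scalar $r$, this is the real quadratic inequality $w^{*}Pw+2\operatorname{Re}(w^{*}q)+r<0$.

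The heart of the proof is to show that $P$ is positive definite. For this I would use that $\phi$ is continuous on the compact set $\overline{\mathbb{B}}$, so $\phi(\overline{\mathbb{B}})$ is compact and $\phi(\mathbb{B})$ is a nonempty bounded open set. If $P$ were not positive definite, choose a unit vector $\xi$ with $\xi^{*}P\xi\le 0$ and a point $w_{0}\in\phi(\mathbb{B})$; along the real line $t\mapsto w_{0}+t\xi$ the quadratic $g(w)=w^{*}Pw+2\operatorname{Re}(w^{*}q)+r$ takes the form $(\xi^{*}P\xi)\,t^{2}+\beta t+g(w_{0})$ with $\beta\in\mathbb{R}$, leading coefficient $\le 0$, and $g(w_{0})<0$. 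Such a function tends to $-\infty$, is a nonconstant affine function, or is the negative constant $g(w_{0})$, so $\{t\in\mathbb{R}:g(w_{0}+t\xi)<0\}$ is unbounded, contradicting the boundedness of $\phi(\mathbb{B})$. Hence $P$ is positive definite. Completing the square gives $\phi(\mathbb{B})=\{w:(w-a)^{*}P(w-a)<\rho\}$ with $a=-P^{-1}q$ and $\rho=q^{*}P^{-1}q-r$, where $\rho>0$ because $\phi(\mathbb{B})$ is nonempty. Factoring $P=R^{*}R$ with $R$ invertible, $\phi(\mathbb{B})=\{w:|R(w-a)|^{2}<\rho\}=\{\,a+\sqrt{\rho}\,R^{-1}z:z\in\mathbb{B}\,\}$, which is a translate of the image of $\mathbb{B}$ under the invertible linear map $\sqrt{\rho}\,R^{-1}$; that is, an ellipsoid.

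I expect the positive definiteness of $P$ to be the main obstacle. The signature count alone only identifies $\{[v]:v^{*}Kv<0\}$ up to a projective change of affine chart, so it is a priori equally compatible with $\phi(\mathbb{B})$ being an unbounded ``hyperbolic'' or ``parabolic'' region; what selects the ``elliptic'' chart is precisely the hypothesis that $\phi$ is defined and one-to-one on the \emph{closed} ball, which makes $\phi(\overline{\mathbb{B}})$ compact and hence keeps the region away from the hyperplane at infinity. The remaining ingredients---the congruence identity $v^{*}Kv=u^{*}Ju$, the block decomposition of $K$, and completing the square---are routine linear algebra.
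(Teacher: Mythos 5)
Your proof is correct and complete; note, though, that the paper does not actually prove this theorem---it is quoted from Cowen and MacCluer and the ``proof'' is just the citation ``See [1], section 3''---so you are supplying an argument the paper outsources. Your route is essentially the standard one behind that reference: encode $\mathbb{B}$ as $\{u^{*}Ju<0\}$ with $J=\operatorname{diag}(1,\ldots,1,-1)$, transport the form by congruence to $K=(m_{\phi}^{-1})^{*}Jm_{\phi}^{-1}$, and identify $\phi(\mathbb{B})$ with the affine slice $\{w:(w,1)^{*}K(w,1)<0\}$. The two places where such an argument usually goes wrong are both handled: the converse inclusion (the last coordinate of $m_{\phi}^{-1}(w,1)$ cannot vanish because that would force $u^{*}Ju\geq 0$), and the positive definiteness of the corner block $P$, which you correctly derive from compactness of $\phi(\overline{\mathbb{B}})$ rather than from the signature of $K$ alone---as you observe, inertia by itself cannot distinguish the bounded chart from the unbounded ones. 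Two cosmetic remarks: the Sylvester signature sentence is never used and could be dropped, and you are implicitly reading ``defined on $\overline{\mathbb{B}}$'' as ``the denominator $\langle z,C\rangle+D$ is nonvanishing on $\overline{\mathbb{B}}$,'' which is the intended meaning but is worth stating as such. Your final normal form $\phi(\mathbb{B})=a+\sqrt{\rho}\,R^{-1}(\mathbb{B})$ matches the paper's definition of an ellipsoid as a translate of the image of the unit ball under an invertible complex linear map, so the argument closes cleanly.
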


\begin{proof}
See [1], section $3$.
\end{proof}

This brings us to the following open question, left for the reader to demonstrate.

\begin{question}
Is there an analogue to Theorem $7$ for the generalized cross ratio if we relax the presumption of points lying on the boundary of a sphere and instead use the boundary of an ellipsoid?
\end{question}

In some form or another, this quantity and its special invariant properties have been examined and celebrated since antiquity. Now that we have a new perspective to talk about this quantity in higher dimensions, the author challenges the reader to explore what other similarities or differences the generalized version of this classical quantity has.

\end{document}